\numberwithin{equation}{section}
\theoremstyle{plain}
\newtheorem{thm}{Theorem}[section]
\newtheorem{theorem}[thm]{Theorem}
\newtheorem{lemma}[thm]{Lemma}
\newtheorem{corollary}[thm]{Corollary}
\newtheorem{proposition}[thm]{Proposition}
\theoremstyle{definition}
\newtheorem{remark}[thm]{Remark}
\newtheorem{definition}[thm]{Definition}
\newtheorem{example}[thm]{Example}
\newtheorem{defn-thm}[thm]{Definition-Theorem}
\newcommand{\sO}{{\mathcal O}}
\newcommand{\bp}{\bar{\partial}}
\newcommand{\Om}{\Omega}
\newcommand{\ts}{\otimes}
\newcommand{\btheorem}{\begin{theorem}}
\newcommand{\etheorem}{\end{theorem}}
\newcommand{\bproposition}{\begin{proposition}}
\newcommand{\eproposition}{\end{proposition}}
\newcommand{\bdefinition}{\begin{definition}}
\newcommand{\edefinition}{\end{definition}}
\newcommand{\bcorollary}{\begin{corollary}}
\newcommand{\ecorollary}{\end{corollary}}
\newcommand{\bproof}{\begin{proof}}
\newcommand{\eproof}{\end{proof}}
\newcommand{\bremark}{\begin{remark}}
\newcommand{\eremark}{\end{remark}}
\newcommand{\eexample}{\end{example}}
\newcommand{\bexample}{\begin{example}}
\newcommand{\elemma}{\end{lemma}}
\newcommand{\blemma}{\begin{lemma}}
\newcommand{\p}{\partial}
\renewcommand{\bar}{\overline}
\renewcommand{\phi}{\varphi}
\newcommand{\ee}{\end{eqnarray*}}
\newcommand{\be}{\begin{eqnarray*}}
\newcommand{\beq}{\begin{equation}}
\newcommand{\eeq}{\end{equation}}
\newcommand{\bd}{\begin{enumerate}}
\newcommand{\ed}{\end{enumerate}}
\def\mb{\mathbb}
\def\mc{\mathcal}
\begin{document}

\title{Logarithmic vanishing theorems for effective $q$-ample divisors}
\makeatletter
\let\uppercasenonmath\@gobble
\let\MakeUppercase\relax
\let\scshape\relax
\makeatother
\author{Kefeng Liu}
\address{Kefeng Liu, Department of Mathematics,
Capital Normal University, Beijing, 100048, China}
\address{Department of Mathematics, University of California at Los Angeles, California 90095}

\email{liu@math.ucla.edu}
\author{Xueyuan Wan}
\address{Xueyuan Wan, Chern Institute of Mathematics \& LPMC, Nankai University, 300071}
\email{xywan@mail.nankai.edu.cn}

\author{Xiaokui Yang}
\address{Xiaokui Yang, Morningside Center of Mathematics, Academy of Mathematics and Systems Science\\ Chinese Academy of Sciences, Beijing, 100190, China}
\address{HCMS, CEMS, NCNIS, HLM, UCAS, Academy of Mathematics and Systems Science\\ Chinese Academy of Sciences, Beijing 100190, China}

 \email{xkyang@amss.ac.cn}
\begin{abstract}
Let $X$ be a compact K\"ahler manifold and $D$ be a simple normal
crossing divisor. If $D$ is the support of some effective $k$-ample
 divisor, we show
$$
    H^q(X,\Omega^p_X(\log D))=0,\quad \text{for}\quad p+q>n+k.$$ \end{abstract}

\maketitle

\section{Introduction}

 The classical Cartan--Serre--Grothendieck theorem says that  a line bundle $L$ over a compact complex manifold $X$ is
 \emph{ample}
  if and only if  for every coherent sheaf $\mathcal F$ on $X$, there exists a positive integer $m_0=m_0(X,\mathcal F,L)$ such that
\beq
 H^{q}(X,\mathcal F\otimes L^{\otimes m})=0, \quad \text{for}\quad q>0,\quad m\geq m_0. \label{asy}
\eeq On the other hand,  on compact complex manifolds, the ampleness
is also equivalent to the existence of a smooth metric with positive
curvature, thanks to the celebrated Kodaira embedding theorem.
Hence, the asymptotic vanishing theorem (\ref{asy}) can imply the
absolute Akizuki-Kodaira-Nakano vanishing theorem: \beq H^{q}(X,
\Om_X^p\ts L)=0,\quad  \text{for} \quad p+q>\dim X. \label{AKN} \eeq
It is well-known that  the implication of (\ref{AKN}) from
(\ref{asy}) is very deep and requires many fantastic  techniques in
complex geometry and algebraic geometry.\\

 The notion of ampleness  has a very natural generalization along the line of Cartan--Serre--Grothendieck criterion.

 \bdefinition A line bundle $L$ over a compact complex manifold $X$ is called \emph{$k$-ample},
 if for any coherent sheaf $\mathcal{F}$ on $X$ there exists a positive integer $m_0=m_0(X,L,\mathcal{F})>0$  such that
\beq H^q(X,\mathcal{F}\otimes L^m)=0,\quad \text{for}\quad q>k,\quad
m\geq m_0.\label{asy2}\eeq  \edefinition \noindent As analogous to
the ample line bundle case, one may wonder whether the asymptotic
vanishing theorem (\ref{asy2}) can imply certain
Akizuki-Kodaira-Nakano type vanishing theorems. With no doubt, it is
a very challenging problem, since in this general context, the
ambient manifold $X$ is not necessarily projective.\\

Actually, even if $X$ is projective,  the asymptotic vanishing
theorem (\ref{asy2}) can not imply the Kodaira type vanishing \beq
H^{q}(X, K_X\ts L)=0,\quad \text{for} \quad q>k. \label{AKN2} \eeq
Indeed, a counter-example is discovered by Ottem in
\cite[Section~9]{Ott}: there exist a projective threefold $X$ and a
$1$-ample line bundle $L$ such that $H^2(X,K_X\ts L)\neq 0$.\\

It is well-known that there is a differential geometric proof of the
vanishing theorem (\ref{AKN}) by using positive metrics on ample
line bundles. There is also a natural generalization for positive
line bundles. A line bundle $L$ over a compact complex manifold $X$
is called \emph{$k$-positive}, if there exists a smooth Hermitian
metric $h$ on $L$ such that the Chern curvature
$R^{(L,h)}=-\sqrt{-1}\p\bp\log h$ has at least $(n-k)$ positive
eigenvalues at any point on $X$. In $1962$, Andreotti and Grauert
proved in \cite[Th\'eor\`eme 14]{AG} that $k$-positive line bundles
are always $k$-ample (see also \cite[Proposition 2.1]{DPS}). In
\cite{DPS}, Demailly, Peternell and Schneider proposed a converse to
the Andreotti-Grauert theorem and asked whether  $k$-ample line
bundles are  $k$-positive. In dimension two, Demailly \cite{Dem}
proved an asymptotic version of a converse to the Andreotti-Grauert
theorem using tools related to asymptotic cohomology; subsequently,
Matsumura \cite{Mat} gave a positive answer to the question for
surfaces. Recently, the third author
 proved in \cite{Yang} that $(\dim X-1)$-ample line bundles are
$(\dim X-1)$-positive when $X$ is a projective manifold. However,
there exist higher dimensional counterexamples in the range
$\frac{\dim X}{2}-1<q<\dim X-2$, constructed by Ottem \cite{Ott}. In
summary, it should not be a reasonable approach to establishing
Akizuki-Kodaira-Nakano type vanishing theorems for $k$-ample line
bundles by using Hermitian
metrics.\\

\noindent The main result of our paper is

\begin{thm}\label{main}
    Let $X$ be a compact K\"ahler manifold with $\dim X=n$ and $D$ be a simple normal crossing divisor. If $D$ is the support of some effective $k$-ample divisor, then
\beq
    H^q(X,\Omega^p_X(\log D))=0,\quad \text{for}\quad p+q>n+k.
\eeq In particular, \beq\label{1.17}
    H^q(X,K_X\ts \sO_X(D))=0,\quad \text{for}\quad q>k.
\eeq
\end{thm}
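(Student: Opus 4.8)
The plan is to reduce the logarithmic vanishing on $X$ to an absolute Akizuki--Kodaira--Nakano-type vanishing on a cyclic cover, exploiting the $k$-ampleness of the auxiliary divisor. Write $A$ for an effective $k$-ample divisor with $\mathrm{Supp}(A) = D$, say $A = \sum a_i D_i$ with $a_i \geq 1$ and $D = \sum D_i$ simple normal crossing. Choose $m$ large and divisible enough that $\sO_X(A)$ admits a section cutting out $A$, and form the associated cyclic cover $\pi\colon Y \to X$ of degree $m$ branched along (a desingularization of) $D$; after resolving, $Y$ is again compact K\"ahler of dimension $n$. The standard computation of the pushforward of the canonical/holomorphic-forms sheaf of a cyclic cover (as in Esnault--Viehweg) gives a decomposition
\beq
    \pi_*\Omega^p_Y(\log \tilde D) \;=\; \bigoplus_{j=0}^{m-1} \Omega^p_X(\log D)\ts \sO_X\!\Big(-\big\lfloor \tfrac{j}{m}A\big\rfloor\Big),
\eeq
so that the $j=0$ summand recovers $\Omega^p_X(\log D)$ itself, and the remaining summands are twisted by the duals of the "fractional" divisors $\lfloor \frac{j}{m}A\rfloor$, which for $j$ in the appropriate range are effective and supported on $D$.

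Next I would bring in the $k$-ampleness. The bundle $\sO_X(A)$ being $k$-ample, its pullbacks and sufficiently high tensor powers control cohomology in degrees $> k$; the key point is that the twisting line bundles appearing on $Y$ (after pullback) are related to powers of $\pi^*\sO_X(A)$ in a way that makes $\pi^*\Omega^p_X(\log D)$-type cohomology on $Y$ in degree $q$ vanish once $p+q > n+k$. Concretely, one applies an Akizuki--Kodaira--Nakano-style statement for $k$-ample (not merely ample) line bundles on the K\"ahler manifold $Y$: this is where the hypothesis $p+q>n+k$ enters, matching the degree shift in the definition of $k$-ampleness. For the projective case such statements are classical (Sommese, and the asymptotic vanishing $(\ref{asy2})$ fed through a Leray/spectral-sequence argument); in the K\"ahler category one expects the earlier sections of the paper to supply the precise tool — presumably a Nakano-type vanishing for $k$-ample bundles obtained by a Hodge-theoretic or $L^2$ argument on $Y$.

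Finally, I would descend: since $\pi$ is finite, $H^q(Y,\Omega^p_Y(\log \tilde D)) = H^q(X, \pi_*\Omega^p_Y(\log \tilde D)) = \bigoplus_j H^q\big(X, \Omega^p_X(\log D)\ts \sO_X(-\lfloor \tfrac{j}{m}A\rfloor)\big)$, and the vanishing of the whole group in the range $p+q>n+k$ forces the vanishing of each summand, in particular the $j=0$ summand $H^q(X,\Omega^p_X(\log D))$. The special case $p = n$ gives $H^q(X, K_X(\log D)) = 0$ for $q>k$, and then the residue sequence $0 \to \Omega^n_X \to \Omega^n_X(\log D) \to \cdots$ — or more directly the identification $K_X(\log D) = K_X\ts\sO_X(D)$ in top degree — yields $(\ref{1.17})$. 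The main obstacle I anticipate is the K\"ahler (non-projective) setting: one cannot invoke Hironaka resolution or Kodaira embedding freely, so making the cyclic cover K\"ahler and establishing the $k$-ample Nakano vanishing on it purely analytically is the delicate technical heart; a secondary subtlety is checking that the fractional-divisor twists on $Y$ genuinely inherit enough positivity from the $k$-ampleness of $A$ after the normalization and desingularization of the cover.
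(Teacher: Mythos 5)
There is a genuine gap at the heart of your argument: the vanishing theorem you plan to apply on the cover $Y$ does not exist. An Akizuki--Kodaira--Nakano-type statement for $k$-ample line bundles, $H^q(Y,\Omega^p_Y\ts L)=0$ for $p+q>\dim Y+k$, is false in general: as recalled in the paper's introduction, Ottem produced a projective threefold and a $1$-ample line bundle $L$ with $H^2(X,K_X\ts L)\neq 0$, and his counterexamples to the converse Andreotti--Grauert problem show that $k$-ampleness does not furnish a Hermitian metric with the required signature, so the ``Nakano-type vanishing for $k$-ample bundles obtained by a Hodge-theoretic or $L^2$ argument on $Y$'' that your plan presupposes cannot hold. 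If instead the input on $Y$ is to be the \emph{logarithmic} vanishing $H^q(Y,\Omega^p_Y(\log\tilde D))=0$ for $p+q>n+k$, that is precisely the statement of the theorem (on $Y$ rather than $X$), and the descent via $\pi_*$ is circular. There are also secondary problems: the cyclic cover requires $\sO_X(A)$ to admit an $m$-th root in $\pic(X)$, which is not automatic; the K\"ahler property of a resolution of the cover needs justification in the non-projective setting; and the pushforward formula should read $\pi_*\Omega^p_Y(\log\tilde D)=\bigoplus_{j}\Omega^p_X(\log D)\ts L^{-j}\big(\lfloor jA/m\rfloor\big)$ with $L^m=\sO_X(A)$, not $\bigoplus_j\Omega^p_X(\log D)\ts\sO_X(-\lfloor jA/m\rfloor)$.

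For comparison, the paper's proof avoids covers and metrics entirely and uses $k$-ampleness only through its definition: applying the asymptotic vanishing to the coherent sheaves $\Omega^p_X(\ell D')$ gives $H^q(X,\Omega^p_X(*D))=0$ for $q>k$ (Lemma \ref{lemma}); the spectral sequence of the meromorphic de Rham complex together with the isomorphism $\mb{H}^s(X,\Omega^*_X(*D))\cong H^s(X-D,\C)$ then yields $H^s(X-D,\C)=0$ for $s>n+k$; and Deligne's $E_1$-degeneration of the logarithmic Hodge--de Rham spectral sequence on the compact K\"ahler manifold $X$ gives $\dim_{\C} H^s(X-D,\C)=\sum_{p+q=s}\dim_{\C}H^q(X,\Omega^p_X(\log D))$, so every summand with $p+q>n+k$ vanishes. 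This is where the K\"ahler hypothesis and the hypothesis that $D$ supports an \emph{effective} $k$-ample divisor actually enter---inputs your approach does not exploit in a way that compensates for the failure of metric positivity for $k$-ample bundles.
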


\noindent Note that, when $X$ is projective and $D$ is an ample
divisor (i.e. $D$ is $0$-ample), Theorem \ref{main} is obtained by
Deligne in \cite{Del69}. For a general $k$-ample line bundle, the
special case (\ref{1.17}) is obtained by Greb and  K\"uronya in
\cite[Theorem~3.4]{Greb} when $X$ is projective. On a projective
toric variety $X$,  Broomhead,  Ottem and  Prendergast-Smith
 proved in \cite[Theorem 7.1]{Broo} that the $k$-ampleness of a line bundle $L$ can imply the Kodaira type vanishing theorem
$H^q(X,K_X\ts L)=0$ for  $q>k$.\\

%

\section{Logarithmic vanishing theorems}

Let $X$ be a compact complex manifold with $\dim_{\mb{C}}X=n$, and
$D=\sum_{i=1}^r D_i$
 be a simple normal crossing divisor, i.e. every irreducible component  $D_i$ is smooth and all intersections are
 transverse. That is, for every $p\in X$, we can
choose local coordinates $z_1, \cdots , z_n$ such that $D =
(\prod_{i=1}^k z_i  = 0)$ in a neighborhood of $p$.

 The sheaf of germs of differential
$p$-forms on $X$ with at most logarithmic poles along $D$, denoted
{$\Omega^p_X(\log D)$} (introduced by Deligne in \cite{Del69}) is
the sheaf whose sections on an open subset $V$ of $X$ are
\begin{align*}
\Gamma(V,\Omega_X^p(\log D)):=\{\alpha\in
\Gamma(V,\Omega_X^p\otimes\mc{O}_X(D))\ \text{and }d\alpha\in
\Gamma(V,\Omega_X^{p+1}\otimes\mc{O}_X(D)) \}.
\end{align*}

\noindent If $D'$ is an effective divisor with $\text{Supp}(D')=D$,
we denote by
\begin{align*}
    \Omega^p_X(*D')=\bigcup_{k\geq 0}\Omega^p_X(k D'),
\end{align*}
which is the sheaf of meromorphic $p$-forms that are holomorphic on
$X-D$ and have poles on of arbitrary (finite) order on $D$. Hence
\begin{align}\label{1.0}
\Omega^p_X(*D)=\Omega^p_X(*D').
\end{align}

\noindent A divisor $D'$ is called \emph{ $k$-ample} if
$\mc{O}_X(D')$ is a  $k$-ample line bundle. In the following result,
we see clearly how to use the $k$-ample condition in the proof of
Theorem \ref{main}.

\begin{lemma}\label{lemma}
Let $X$ be a compact complex manifold with $\dim_{\mb{C}}X=n$ and
$D'$ be an effective $k$-ample divisor. Then
\begin{align*}
H^q(X,\Omega^p_X(*D'))=0,\quad \text{for}\quad q>k.
\end{align*}
\end{lemma}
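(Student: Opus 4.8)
The plan is to exhibit $\Omega^p_X(*D')$ as a filtered direct limit of coherent sheaves and then feed the $k$-ampleness of $\mathcal{O}_X(D')$ straight into the asymptotic vanishing built into its definition. By the definition of $\Omega^p_X(*D')$ we have $\Omega^p_X(*D')=\bigcup_{m\ge 0}\Omega^p_X(mD')$, and the inclusions $\Omega^p_X(mD')\hookrightarrow\Omega^p_X\bigl((m+1)D'\bigr)$, given by multiplication by the canonical section of $\mathcal{O}_X(D')$ vanishing on $D'$, realize this as
\[
\Omega^p_X(*D')\;=\;\varinjlim_m\bigl(\Omega^p_X\otimes\mathcal{O}_X(mD')\bigr).
\]

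Next I would invoke the definition of a $k$-ample line bundle with $L=\mathcal{O}_X(D')$ and the coherent sheaf $\mathcal{F}=\Omega^p_X$: there is an integer $m_0>0$ such that $H^q\bigl(X,\Omega^p_X\otimes\mathcal{O}_X(D')^{\otimes m}\bigr)=0$ for all $q>k$ and all $m\ge m_0$. Since $\mathcal{O}_X(D')^{\otimes m}=\mathcal{O}_X(mD')$, this says precisely that $H^q\bigl(X,\Omega^p_X(mD')\bigr)=0$ for $q>k$ and $m\ge m_0$.

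To finish, I would use that, $X$ being compact, sheaf cohomology commutes with the filtered direct limit above, so that for $q>k$
\[
H^q\bigl(X,\Omega^p_X(*D')\bigr)\;=\;\varinjlim_m H^q\bigl(X,\Omega^p_X(mD')\bigr).
\]
The right-hand side is the colimit of a filtered system whose terms all vanish from the index $m_0$ on, hence it is $0$; this proves the lemma.

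The one step that genuinely deserves care --- and thus the main, if modest, obstacle --- is the commutation of cohomology with the direct limit, which in the analytic category is not handed to us by the standard argument for Noetherian schemes. Here one exploits the compactness of $X$: either through the identification $H^q=H^q_c$ together with the fact that compactly supported cohomology commutes with filtered direct limits on a locally compact space, or, more concretely, through a direct computation with a finite open cover of $X$, which bounds the pole order of any cocycle and thereby places every class in $H^q(X,\Omega^p_X(*D'))$ in the image of $H^q(X,\Omega^p_X(mD'))$ for $m\gg 0$. Everything else is a formal unwinding of the definitions, and it exhibits transparently how the $k$-ample hypothesis enters.
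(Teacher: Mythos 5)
Your proposal is correct and follows essentially the same route as the paper: write $\Omega^p_X(*D')$ as the increasing union of the coherent sheaves $\Omega^p_X(mD')$, apply the definition of $k$-ampleness with $\mathcal{F}=\Omega^p_X$, and pass to the limit using compactness of $X$. The paper carries out the limit step by hand on Dolbeault/\v{C}ech representatives (a class of bounded pole order dies at a finite stage, hence in the limit), which is exactly the surjectivity half of the commutation of cohomology with filtered colimits that you invoke and justify.
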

\begin{proof}
    We use the notations $[\bullet]$ and $[\bullet]_\ell$ to denote a class in $H^q(X,\Omega^p_X(*D'))$ and $H^q(X,\Omega^p_X(\ell D'))$ respectively.
     If $[\alpha]\in H^q(X,\Omega^p_X(*D'))$, from the definition of $*D'$, then there exists some $\ell_0>0$ such that
    \begin{align}\label{1.1}
    [\alpha]_{\ell}\in H^q(X,\Omega^p_X(\ell D')),\quad \text{for}\quad \ell\geq \ell_0.
    \end{align}
Since $\mc{O}_X(D')$ is $k$-ample, by definition, there exists some
$\ell_1>0$ such that
\begin{align}\label{1.2}
H^q(X,\Omega^p_X(\ell D'))=0,\quad \text{for}\quad q>k,\quad
\ell\geq \ell_1.
\end{align}

\noindent By (\ref{1.1}) and (\ref{1.2}), for $\ell\geq
\max\{\ell_0,\ell_1\}$ and $q>k$, we know
\begin{align*}
\alpha=\bp\beta
\end{align*}
for some $\beta\in A^{0,q}(X,\Omega^p_X(\ell D'))\subset
A^{0,q}(X,\Omega^p(*D'))$.  Therefore,
\begin{align*}
H^q(X,\Omega^p_X(*D'))=0
\end{align*} for $q>k$.
\end{proof}


 \noindent\emph{The
proof of Theorem \ref{main}.}  There is an  exact sequence:
\begin{align*}
H^q(X,\Omega^0_X(*D))\xrightarrow{d}H^q(X,\Omega^1_X(*D))\xrightarrow{d}\cdots\xrightarrow{d}
H^q(X,\Omega^n_X(*D)).
\end{align*} The associated  cohomology is denoted
by
 $$E_2^{p,q}=H^p_d(H^q(X,\Omega^*_X(*D))).$$
By Lemma \ref{lemma} and formula (\ref{1.0}), one has
\begin{align}\label{1.3}
    E_2^{p,q}=0,\quad \text{for}\quad q>k.
\end{align}

\noindent Let $\mb{H}^*(X, \Omega^*(*D))$ be the hypercohomology of
the complex. Hence, by (\ref{1.3}), one has \beq
    \mb{H}^s(X, \Omega^*(*D))=0,\quad\text{for}\quad s> n+k.\label{1.4}
\eeq

\noindent On the other hand, it is well-known (e.g. \cite[Page
453]{Griffith}) that  \beq \mb{H}^s(X, \Omega^*(*D))\cong
H^s(X-D,\mb{C}).\eeq  Hence, by (\ref{1.4}), one has
\begin{align}\label{1.41}
    H^s(X-D,\mb{C})=0,\quad\text{for}\quad s> n+k.
\end{align}
On the other hand, one has (e.g. \cite{Del72}) the following
identity when $X$ is K\"ahler:
    \begin{align}\label{1.5}
    \dim_{\mb{C}}H^s(X-D,\mb{C})= \sum_{p+q=s}\dim_{\mb{C}}H^q(X,\Omega^p_X(\log D)).
    \end{align}
Combining (\ref{1.41}) with (\ref{1.5}), we obtain
\begin{align*}
    H^q(X,\Omega^p_X(\log D))=0,
\end{align*} for $p+q> n+k$.
\qed

\end{document}